\begin{document}
\title{Optimal Bounds for Integrals with Respect to Copulas and Applications\thanks{Communicated by Paul I. Barton}}
\date{}
\author{Markus Hofer \and Maria Rita Iac\`o\thanks {The second author is funded by the fellowship of the Doctoral School in Mathematics and Computer Science of University of Calabria and is partially supported by the Austrian Science Fund (FWF): W1230, Doctoral Program ``Discrete Mathematics''}}
\institute{Markus Hofer (corresponding author)
\at Graz University of Technology, Institute of Mathematics A, Steyrergasse 30, 8010 Graz, Austria\\ \email{markus.hofer@tugraz.at}
\and
Maria Rita Iac\`o
\at Graz University of Technology,
Institute of Mathematics A, Steyrergasse 30, 8010 Graz, Austria
\at University of Calabria, Department of Mathematics and Computer Science, Via P. Bucci 30B, 87036 Arcavacata di Rende (CS), Italy\\
\email{iaco@math.tugraz.at} 
}
\maketitle

\begin{abstract}
 We consider the integration of two-dimensional, piecewise constant functions with respect to copulas. By drawing a connection to linear assignment problems, we can give optimal upper and lower bounds for such integrals and construct the copulas for which these bounds are attained. Furthermore, we show how our approach can be extended in order to approximate extremal values in very general situations. Finally, we apply our approximation technique to problems in financial mathematics and uniform distribution theory, such as the model-independent pricing of first-to-default swaps.
\end{abstract}
\keywords{Linear assignment problems \and copulas \and Fr\'echet-Hoeffding bounds \and credit risk \and uniform distribution theory}
\subclass{91G80 \and 90C90 \and 11K31}

\section{Introduction}\label{intro}
In the last decades, the importance of copulas in mathematical modeling was recognized by many researchers; see e.g.\ \cite{tank,pr1,packham}. Many applications come from actuarial and financial mathematics, where the joint distribution of a vector of random variables is studied frequently. Typical problems are the pricing of basket options or the derivation of the Value at Risk of a portfolio. In this context, an interesting question concerns the best or worst case when the marginal distributions are given but the dependence structure of the underlying random vector is unknown or only partially known. Such situations appear frequently since dependence structures are in general more difficult to calibrate from empirical data than marginal distributions. Thus we are interested in maximizing the value of an integral by considering all possible copulas as integrators.\\

The underlying problem is in general open, however there exist solutions for some particular classes of integrand functions $f$. For instance, Rapuch and Roncalli \cite{rap} consider basket option pricing when no information on the dependence of the underlying random variables is available. They derive bounds for the prices of several options of European type in the Black-Scholes model, where the integrand function has a mixed second derivative with constant sign on the unit square. Tankov \cite{tank} extends these results to the greater class of two-increasing (or supermodular) functions $f$, a definition will be given in the next section. Furthermore the author gives an extension to option pricing problems under partial information on the dependence of the underlying random variables. Note that the above results are based on classical findings due to Tchen \cite{tchen}.\\

Similar results and applications in number theory are presented by Fialov\'a and Strauch \cite{fial}. They consider bounds for functionals which depend on two uniformly distributed point sequences. Under similar conditions as in \cite{rap}, they show that the Fr\'echet-Hoeffding bounds $W$ and $M$ are the copulas for which the extremal values are obtained. We remark that the underlying problem was formulated as an open problem in the unsolved problem collection of \emph{Uniform Distribution Theory} \footnote{Problem 1.29 in the open problem collection as of 19. January 2013 (http://www.boku.ac.at/MATH/udt/unsolvedproblems.pdf)}. A more detailed introduction to applications in uniform distribution theory is given in Section \ref{udt} of this article.\\ 

A list of results for a different class of functions $f$ exists in the context of financial risk theory; see e.g.\ Puccetti and R\"uschendorf \cite{pr1} or Albrecher et al.\ \cite{aak}. In \cite{pr1} the authors derive sharp bounds for quantiles of the loss of a portfolio, represented by a finite sum of dependent random variables, when no or only partial information on the dependence structure within the portfolio is available. Such quantities play an important role in actuarial and financial mathematics, for instance in the computation of the Value at Risk. Recently this approach has been generalized to derive optimal bounds for the expected shortfall of a portfolio; see Puccetti \cite{pr3}. Many of these results rely on the so-called rearrangement method due to R\"uschendorf \cite{ru}. Note that the application of the rearrangement method requires a rather strong regularity of the integrand function; see e.g.\ \cite{pr3}. The optimal bounds in the articles mentioned above are attained by using so-called shuffles of $M$-class of copulas, which we define in the next section.\\

The structure of our paper is the following: in the next section, after a short introduction to copulas, we present our main results, which are bounds on integrals of piecewise constant functions. Furthermore, we formulate an approximation technique for a very general class of integrand functions. In the third section, we apply our results to problems in uniform distribution theory and financial mathematics.

\section{Main Results}\label{mainsec}

In the sequel we consider expectations,
\begin{equation}\label{start}
 \mathbb{E}[f(X,Y)],
\end{equation}
where $f$ is a function on $[0,1[^2$ and $X,Y$ are uniformly distributed random variables on the unit interval. In this situation the joint distribution function $C$ of $X$ and $Y$ is a copula. 

\begin{definition}[Copula]\label{cop}
 Let $C$ be a positive function on the unit square. Then $C$ is called (two)-copula iff for every $x,y \in [0,1[$
\begin{align*}
 C(x,0) &= C(0,y) = 0,\\
 C(x,1) &= x \text{ and } C(1,y) = y,
\end{align*}
and for every $x_1,x_2,y_1,y_2 \in [0,1[$ with $x_2 \geq x_1$ and $y_2 \geq y_1$
\begin{equation}\label{two-inc}
 C(x_2,y_2) - C(x_2, y_1) - C(x_1, y_2) + C(x_1,y_1) \geq 0.
\end{equation}
A function which satisfies \eqref{two-inc} is called two-increasing or supermodular. In the sequel we denote by $\mathcal{C}$ the set of all two-copulas.
\end{definition}

Note that the restriction to uniformly distributed marginals is insignificant since by Sklar's Theorem, see e.g.\ \cite[Theorem 2.3.3]{nelsen}, we can write every continuous two-dimensional distribution function $H$ as
\begin{equation*}
 H(x,y) = C(F(x),G(y)),
\end{equation*}
where $F,G$ denote the marginal distributions of $H$ and $C$ is a copula. Moreover if $F$ and $G$ are continuous, then $C$ is unique and we have
\begin{equation*}
\int_{[0,1[^2} f(x,y) dH(x,y) =  \int_{[0,1[^2} f(F^{-1}(x),G^{-1}(y)) dC(x,y),
\end{equation*}
where $F^{-1},G^{-1}$ denote the inverse distribution functions of the marginals.\\

Copulas can be ordered stochastically, where the upper and lower bounds are called Fr\'echet-Hoeffding bounds (see e.g.\ \cite[Theorem 2.2.3]{nelsen}). More precisely, for every two-copula $C$ we have
\begin{equation}\label{frechet}
 \max(x+y-1,0) \leq C(x,y) \leq \min(x,y), \quad \text{ for all } (x,y) \in [0,1[^2.
\end{equation}
It is also well-known that the Fr\'echet-Hoeffding lower and upper bounds $W(x,y) = \max(x+y-1,0)$ and $M(x,y) = \min(x,y)$ are copulas in the two dimensional setting. For higher dimensions an analogon of \eqref{frechet} exists, however the lower bound is in general not a copula, see \cite[Theorem 3.2 and 3.3]{joe}. For a detailed introduction to copulas see \cite{nelsen,joe}.\\ 

Thus, according to the discussion in the beginning of Section \ref{intro}, we are interested in bounds of the form
\begin{equation}\label{bounds}
  \int_{[0,1[^2} f(x,y) dC_{\min}(x,y) \leq  \int_{[0,1[^2} f(x,y) dC(x,y) \leq  \int_{[0,1[^2} f(x,y) dC_{\max}(x,y),
\end{equation}
for all $C \in \mathcal{C}$, where $C_{\min}, C_{\max}$ are copulas. As mentioned above a particularly interesting subclass of copulas for our problems are so-called shuffles of $M$, see \cite[Section 3.2.3]{nelsen}.\\

\begin{definition}[Shuffles of $M$]\label{shuf}
Let $n \geq 1$, $s = (s_0, \ldots, s_n)$ be a partition of the unit interval with $0 = s_0 < s_1 < \ldots < s_n = 1$, $\pi$ be a permutation of $S_n = \{1,\ldots,n\}$ and $\omega \colon S_n \rightarrow \{-1, 1\}$. We define the partition $t= (t_0, \ldots, t_n), ~0 = t_0 < t_1 < \ldots < t_n = 1$ such that each $[s_{i-1}, s_i[ \times [t_{\pi(i)-1}, t_{\pi(i)}[$ is a square. A copula $C$ is called shuffle of $M$ with parameters $\{n, s, \pi, \omega\}$ if it is defined in the following way: for all $i \in \{1, \ldots, n\}$ if $\omega(i) = 1$, then $C$ distributes a mass of $s_i - s_{i-1}$ uniformly spread along the diagonal of $[s_{i-1}, s_i[ \times [t_{\pi(i)-1}, t_{\pi(i)}[$ and if $\omega(i) = -1$ then $C$ distributes a mass of $s_i - s_{i-1}$ uniformly spread along the antidiagonal of $[s_{i-1}, s_i[ \times [t_{\pi(i)-1}, t_{\pi(i)}[$.
\end{definition}

Note that the two Fr\'echet-Hoeffding bounds $W, M$ are trivial shuffles of $M$ with parameters $\{1, (0,1), (1), -1\}$ and $\{1, (0,1), (1), 1\}$, respectively. Furthermore, it is well-known that every copula can be approximated arbitrarily close with respect to the supremum norm by a shuffle of $M$; see e.g.\ \cite[Theorem 3.2.2]{nelsen}. In the sequel we denote by $I_n$ the partition of the unit interval which consists of $n$ intervals of equal length.\\

In next theorem we illustrate the close relation of \eqref{bounds} to problems in optimization theory, namely linear assignment problems of the form
\begin{equation}\label{lin}
 \max_{\pi \in \mathcal{P}} \sum_{i = 1}^n a_{i, \pi(i)},
\end{equation}
where $\mathcal{P}$ is the set of all permutations of $\{1, \ldots, n\}$. Such problems are well understood and can be solved efficiently, for example by using the celebrated Hungarian Algorithm due to Kuhn \cite{kuhn}. For a detailed description of assignment problems and related solution algorithms we refer to \cite{burk}.

\begin{theorem}\label{main1}
Let $n \geq 1$, $A = \{a_{i,j}\}_{i,j=1, \ldots,n}$ be a real-valued $n \times n$ matrix and let the function $f$ be defined as
\begin{equation*}
 f(x,y) := a_{i,j}, \quad (x,y) \in \left[ \frac{i-1}{n}, \frac{i}{n} \right [ \times \left[ \frac{j-1}{n}, \frac{j}{n} \right[.
\end{equation*}
 Then the copula $C_{\max}$ which maximizes
\begin{equation}\label{int}
 \max_{C \in \mathcal{C}} \int_{[0,1[^2} f(x,y) dC(x,y)
\end{equation}
 is given as a shuffle of $M$ with parameters $\{n, I_n, \pi^*, 1\}$, where $\pi^*$ is the permutation which solves the assignment problem
\begin{equation*}
 \max_{\pi \in \mathcal{P}} \sum_{i = 1}^n a_{i, \pi(i)}.
\end{equation*}
 Moreover, the maximal value of \eqref{int} is given as
\begin{equation}\label{maxval}
  \int_{[0,1[^2} f(x,y) dC_{\max}(x,y) = \frac{1}{n} \sum_{i = 1}^n a_{i, \pi^*(i)}.
\end{equation}
\end{theorem}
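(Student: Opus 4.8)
The plan is to prove the theorem in two halves: first that any shuffle of $M$ with parameters $\{n, I_n, \pi, 1\}$ achieves the value $\frac{1}{n}\sum_i a_{i,\pi(i)}$, and second that no copula whatsoever can exceed $\max_\pi \frac{1}{n}\sum_i a_{i,\pi(i)}$. For the first half I would observe that for a shuffle of $M$ with parameters $\{n, I_n, \pi, 1\}$ the mass of $C$ is concentrated on the $n$ diagonal segments of the squares $[\frac{i-1}{n},\frac{i}{n}[\times[\frac{\pi(i)-1}{n},\frac{\pi(i)}{n}[$, each carrying mass $\frac{1}{n}$. Since $f$ is constant and equal to $a_{i,\pi(i)}$ on the square containing the $i$-th segment, the integral is simply $\sum_{i=1}^n \frac{1}{n} a_{i,\pi(i)}$. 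Taking $\pi = \pi^*$ gives \eqref{maxval} as a lower bound for the maximum in \eqref{int}.

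For the upper bound, the key point is that $C$, being a copula, is the joint law of a pair $(X,Y)$ of uniform random variables on $[0,1[$, so $\int f\,dC = \sum_{i,j} a_{i,j}\, C(Q_{i,j})$ where $Q_{i,j} = [\frac{i-1}{n},\frac{i}{n}[\times[\frac{j-1}{n},\frac{j}{n}[$ and $m_{i,j} := C(Q_{i,j}) = \mathbb{P}((X,Y)\in Q_{i,j})\ge 0$. The uniform marginal conditions force $\sum_j m_{i,j} = \frac{1}{n}$ for each $i$ and $\sum_i m_{i,j} = \frac{1}{n}$ for each $j$; in other words $n\cdot(m_{i,j})$ is a doubly stochastic matrix. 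Hence
\begin{equation*}
 \int_{[0,1[^2} f(x,y)\,dC(x,y) = \sum_{i,j=1}^n a_{i,j} m_{i,j} \le \max_{D \text{ bistochastic}} \frac{1}{n}\sum_{i,j=1}^n a_{i,j} d_{i,j}.
\end{equation*}
By the Birkhoff--von Neumann theorem the set of doubly stochastic matrices is the convex hull of the permutation matrices, so the linear functional $D \mapsto \sum a_{i,j} d_{i,j}$ attains its maximum at a vertex, i.e.\ at some permutation matrix $P_\pi$, giving the bound $\frac{1}{n}\max_\pi \sum_i a_{i,\pi(i)}$. Combining the two halves shows that the maximum in \eqref{int} equals $\frac{1}{n}\sum_i a_{i,\pi^*(i)}$ and is attained by the shuffle with permutation $\pi^*$.

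The main obstacle is the passage from ``copula'' to ``nonnegative matrix with the prescribed row and column sums,'' i.e.\ verifying that the marginal constraints of Definition \ref{cop} translate exactly into the doubly stochastic condition on $(m_{i,j})$; this uses $C(x,1)=x$ and $C(1,y)=y$ evaluated at the grid points $x,y \in \{0,\frac1n,\dots,1\}$ together with the two-increasing property to express box masses via inclusion--exclusion. A secondary technical point is that the shuffle of $M$ described in Definition \ref{shuf} is indeed a copula and places exactly mass $\frac1n$ on each relevant square, which is immediate from the definition since with the partition $I_n$ the squares are precisely the $Q_{i,\pi(i)}$. Everything else is the Birkhoff--von Neumann theorem and the trivial evaluation of an integral of a step function, which I would not belabor.
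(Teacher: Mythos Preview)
Your proposal is correct and follows essentially the same route as the paper: both arguments reduce $\int f\,dC$ to $\sum_{i,j} a_{i,j} m_{i,j}$ with $n(m_{i,j})$ doubly stochastic, invoke the Birkhoff--von~Neumann theorem to bound this by the maximum over permutation matrices, and observe that the shuffle of $M$ with permutation $\pi^*$ realizes the bound. The only cosmetic difference is that the paper writes the Birkhoff decomposition explicitly as $B_C = \sum_k t_k B_{C_k}$ and then reads off the integral as a convex combination of the shuffle values, whereas you phrase it as maximizing a linear functional over a polytope; the content is identical.
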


\begin{proof}
Let $\{C_k(x,y), k = 1,\ldots, n! = N\}$ be the set of all shuffles of $M$ with parameters of the form $\{n, I_n, \pi_k, 1\}$ and let $t_k \geq 0, ~k = 1, \ldots, N$, where $\sum_{k = 1}^N t_k = 1$. Then $C'(x,y) = \sum_{k = 1}^N t_k C_k(x,y)$ is always a copula satisfying 
\begin{equation*}
 \int_{[0,1[^2} f(x,y) dC'(x,y) \leq \frac{1}{n} \sum_{i = 1}^n a_{i, \pi^*(i)},
\end{equation*}
where $\pi^*$ is given in the statement of the theorem.\\

For an arbitrary copula $C \in \mathcal{C}$ we define the matrix $B_C$ as
\begin{equation*}
 B_C(i,j) = n \int_{\left [ \frac{i-1}{n}, \frac{i}{n} \right [ \times \left [ \frac{j-1}{n}, \frac{j}{n} \right [} dC(x,y).
\end{equation*}
It follows by Definition \ref{cop} that $B_C$ is doubly stochastic and by Definition \ref{shuf} that $B_{C_k}$ is a permutation matrix. Furthermore it follows by the Birkhoff-von Neumann Theorem that the set of doubly stochastic matrices coincides with the convex hull of the set of permutation matrices, see e.g. \cite{Mirsky}. Thus for every $C$ there exist $t_k \geq 0, ~k = 1,\ldots,N $ with $\sum_{k = 1}^N t_k = 1$ such that
\begin{equation*}
 B_C(i,j) = \sum_{k = 1}^N t_k B_{C_k}(i,j), \quad \text{ for every } i,j,
\end{equation*}
and hence
\begin{equation*}
 \int_{[0,1[^2} f(x,y) dC(x,y) = \sum_{k = 1}^N t_k \int_{[0,1[^2} f(x,y) dC_k(x,y) \leq \frac{1}{n} \sum_{i = 1}^n a_{i, \pi^*(i)}\ .\qquad \qed
\end{equation*}
\end{proof}

Note that the maximal copula in Theorem \ref{main1} is by no means unique, since for instance the value of the integral in \eqref{int} is independent of the choice of $\omega$.\\ 

Obviously, we can derive a lower bound in Theorem \ref{main1} by considering \linebreak$g(x,y) = -f(x,y)$. Furthermore, it is easy to see that Theorem \ref{main1} applies to all functions $f$ which are constant on sets of the form 
\begin{equation*}
 I_{i,j} = \left[ s_i, s_{i+1} \right[ \times \left[ t_j, t_{j+1} \right[, \quad i = 0, \ldots, n-1, ~ j = 0, \ldots, m-1,
\end{equation*}
where $0 = s_0 < s_1 < \ldots < s_n = 1$ and $0 = t_0 < t_1 < \ldots < t_m = 1$ are rational numbers.\\

The following generalization of our approach applies to a wide class of functions on the unit square.

\begin{theorem}\label{main2}
 Let $f$ be a continuous function on $[0,1]^2$, let the sets $I^n_{i,j}$ be given as
\begin{equation*} 
 I^n_{i,j} = \left[ \frac{i-1}{2^n}, \frac{i}{2^n} \right [ \times \left[ \frac{j-1}{2^n}, \frac{j}{2^n} \right[ \quad \text{ for } i,j = 1, \ldots, 2^n,
\end{equation*}
for every $n > 1$ and define the functions $\underline{f}_n, \overline{f}_n$ as
\begin{align}
 \underline{f}_n(x,y) &= \min_{(x,y) \in I^n_{i,j}} f\left( x, y \right), \quad \text{ for all } (x,y) \in I^n_{i,j},\notag\\
 \overline{f}_n(x,y) &= \max_{(x,y) \in I^n_{i,j}} f\left( x, y \right), \quad \text{ for all } (x,y) \in I^n_{i,j}.\label{mini}
\end{align}
 Furthermore, let $\underline{C}^n_{\max}, \overline{C}^n_{\max}$ be the copulas which maximize
\begin{equation*}
  \max_{C \in \mathcal{C}} \int_{[0,1[^2} \underline{f}_n(x,y) dC(x,y) \text{ and } \max_{C \in \mathcal{C}} \int_{[0,1[^2} \overline{f}_n(x,y) dC(x,y),
\end{equation*}
respectively. Then
\begin{align}
 \int_{[0,1[^2} \underline{f}_n(x,y) d\underline{C}^n_{\max}(x,y) &\leq \sup_{C \in \mathcal{C}} \int_{[0,1[^2} f(x,y) dC(x,y) \notag\\ 
  &\leq \int_{[0,1[^2} \overline{f}_n(x,y) d\overline{C}^n_{\max}(x,y),\label{bound}
\end{align}
for every $n$, and
\begin{align}
 \lim_{n \rightarrow \infty} \int_{[0,1[^2} \underline{f}_n(x,y) d\underline{C}^n_{\max}(x,y) &= \lim_{n \rightarrow \infty} \int_{[0,1[^2} \overline{f}_n(x,y) d\overline{C}^n_{\max}(x,y) \notag\\ 
&= \sup_{C \in \mathcal{C}} \int_{[0,1[^2} f(x,y) dC(x,y).\label{conv}
\end{align}
\end{theorem}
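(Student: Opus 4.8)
The plan is to exploit the pointwise sandwich $\underline{f}_n \le f \le \overline{f}_n$ on $[0,1[^2$ together with Theorem \ref{main1} and the uniform continuity of $f$. First I would observe that, since $[0,1]^2$ is compact and $f$ is continuous, the minima and maxima defining $\underline{f}_n$ and $\overline{f}_n$ in \eqref{mini} are attained, so both functions are real-valued and constant on each dyadic square $I^n_{i,j}$. These squares are exactly of the form required by Theorem \ref{main1} (with the role of $n$ there played by $2^n$), so that theorem guarantees that the maximizers $\underline{C}^n_{\max}$ and $\overline{C}^n_{\max}$ exist, may be taken to be shuffles of $M$, and that the two maximal values coincide with the optimal values of the associated linear assignment problems.

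Next I would establish the two inequalities in \eqref{bound}. For the upper bound, note that for every copula $C$ one has $\int f \, dC \le \int \overline{f}_n \, dC \le \max_{C' \in \mathcal{C}} \int \overline{f}_n \, dC' = \int \overline{f}_n \, d\overline{C}^n_{\max}$; taking the supremum over $C \in \mathcal{C}$ yields the right-hand inequality. For the lower bound, using $\underline{f}_n \le f$ gives $\int \underline{f}_n \, d\underline{C}^n_{\max} \le \int f \, d\underline{C}^n_{\max} \le \sup_{C \in \mathcal{C}} \int f \, dC$. This step does not require the supremum over $C$ to be attained; boundedness of $f$ on the compact square merely ensures that it is finite. (It is also harmless here that the integrals are taken over $[0,1[^2$ rather than $[0,1]^2$, since for any copula the lines $\{x=1\}$ and $\{y=1\}$ carry no mass, the marginals being uniform.)

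For the convergence \eqref{conv} I would run a squeeze argument. Write $\underline{m}_n = \int \underline{f}_n \, d\underline{C}^n_{\max}$, $\overline{m}_n = \int \overline{f}_n \, d\overline{C}^n_{\max}$ and $S = \sup_{C \in \mathcal{C}} \int f \, dC$; by \eqref{bound} we already have $\underline{m}_n \le S \le \overline{m}_n$, so it suffices to prove $\overline{m}_n - \underline{m}_n \to 0$. By uniform continuity of $f$ on $[0,1]^2$, given $\varepsilon > 0$ there is $\delta > 0$ with $|f(u) - f(v)| < \varepsilon$ whenever $\|u - v\| < \delta$; once $\sqrt{2}\,2^{-n} < \delta$, every square $I^n_{i,j}$ has diameter below $\delta$, hence $0 \le \overline{f}_n(x,y) - \underline{f}_n(x,y) < \varepsilon$ for all $(x,y)$. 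Since every $C \in \mathcal{C}$ is a probability measure, $\int \overline{f}_n \, dC \le \int \underline{f}_n \, dC + \varepsilon$ for every $C$, and taking maxima over $C$ gives $\overline{m}_n \le \underline{m}_n + \varepsilon$; combined with the trivial bound $\overline{m}_n \ge \int \overline{f}_n \, d\underline{C}^n_{\max} \ge \int \underline{f}_n \, d\underline{C}^n_{\max} = \underline{m}_n$ this yields $0 \le \overline{m}_n - \underline{m}_n \le \varepsilon$ for all large $n$. Hence $\overline{m}_n - \underline{m}_n \to 0$, and the sandwich $\underline{m}_n \le S \le \overline{m}_n$ forces both sequences to converge to $S$, which is \eqref{conv}.

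The step I expect to be the main (though still mild) obstacle is the passage from the uniform smallness of $\overline{f}_n - \underline{f}_n$ to the smallness of the gap $\overline{m}_n - \underline{m}_n$ between the two optimal integral values: one has to argue that maximizing over all copulas does not destroy the uniform estimate, which is precisely where the normalization (total mass one) of copulas enters. Everything else is bookkeeping — attainment of the extrema via Theorem \ref{main1}, the elementary pointwise monotonicity $\underline{f}_n \uparrow$, $\overline{f}_n \downarrow$ (which is not strictly needed but makes the convergence monotone), and the boundary remark above.
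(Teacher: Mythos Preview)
Your proposal is correct and follows essentially the same approach as the paper: the sandwich \eqref{bound} from the pointwise inequalities $\underline{f}_n \le f \le \overline{f}_n$ together with Theorem \ref{main1}, then uniform continuity of $f$ to make $\overline{f}_n - \underline{f}_n$ uniformly small and hence $\overline{m}_n - \underline{m}_n \to 0$. Your write-up is in fact slightly more explicit than the paper's at the step you flagged as the main obstacle (the paper compresses the passage from $\overline{f}_n \le \underline{f}_n + \varepsilon$ to $\overline{m}_n \le \underline{m}_n + \varepsilon$ into a single line), but the logic is identical.
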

\begin{proof}
The inequalities in \eqref{bound} follow immediately from the construction of $\underline{f}_n,\overline{f}_n$ and Theorem \ref{main1}. Furthermore since $f$ is continuous on $[0,1]^2$ we have that for every $\epsilon > 0$ there exists an integer $n$ such that
\begin{equation}\label{eps}
 | \overline{f}_n(x,y) - \underline{f}_n(x,y) | < \epsilon, \quad \text{ for all } (x,y) \in [0,1]^2.
\end{equation}
Moreover, by Theorem \ref{main1}, for every $n$ we can write
\begin{equation*}
  \int_{[0,1[^2} \underline{f}_n(x,y) d\underline{C}^n(x,y) = \frac{1}{2^n} \sum_{i = 1}^{2^n} a_{i, \pi^*(i)},
\end{equation*}
for a permutation $\pi^*$ and a real valued matrix $A = \{a_{i,j}\}_{i,j=1, \ldots,n}$ with
\begin{equation*}
 a_{i,j} = \min_{(x,y) \in I^n_{i,j}} f\left( x, y \right), \quad \text{ for } i,j = 1, \ldots, 2^n.
\end{equation*}
Using \eqref{eps}, we get that
\begin{equation*}
  \int_{[0,1[^2} \overline{f}_n(x,y) d\overline{C}^n(x,y) \leq \int_{[0,1[^2} (\underline{f}_n(x,y) + \epsilon) d\underline{C}^n(x,y) = \frac{1}{2^n} \sum_{i = 1}^{2^n} (a_{i, \pi^*(i)} + \epsilon)
\end{equation*}
and thus
\begin{equation*}
  \left | \int_{[0,1[^2} \overline{f}_n(x,y) d\overline{C}^n(x,y) - \int_{[0,1[^2} \underline{f}_n(x,y) d\underline{C}^n(x,y) \right | < \epsilon.
\end{equation*}
Combining this with \eqref{bound}, we get \eqref{conv}. \qquad \qed
\end{proof}

The assumption that $f$ is continuous can, perhaps, be relaxed to the case that $f$ is $C$-continuous a.e.\ for all $C \in \mathcal{C}$. This is required to make sure that
\begin{equation*}
 \int_{[0,1[^2} f(x,y) dC(x,y)
\end{equation*}
exists for all $C \in \mathcal{C}$.\\

By defining the function families $\underline{f}_n, \overline{f}_n$ differently, we might get an approximation technique which converges faster to the optimal value, for instance we could use 
\begin{equation*}
 f_n(x,y) = f\left(\frac{i}{2^n}, \frac{j}{2^n} \right), \quad \text{ for all } (x,y) \in I^n_{i,j}.
\end{equation*}
Furthermore, the mini- and maximization steps in \eqref{mini} can be time-consuming, for instance when these problems are not explicitly solvable. However the advantage of the present approach lies in the fact that we get an upper and lower bound of the optimal value for every $n$, which is obviously useful for numerical applications.\\ 

In numerical investigations, where \eqref{mini} could not be solved explicitly, we used mini- and maximization over a fixed grid in each $I^n_{i,j}$. This results in a fast computation, however we obviously lose the property of upper and lower bounds for every $n$.\\

By assuming Lipschitz-continuity of $f$, we can describe the rate of convergence of our method.

\begin{corollary}
 Let the assumptions of Theorem \ref{main2} hold and, in addition assume that $f$ is Lipschitz-continuous on $[0,1]^2$ with parameter $L$. Then
\begin{equation*}
 \left | \int_{[0,1[^2} \overline{f}_n(x,y) d\overline{C}^n(x,y) -  \int_{[0,1[^2} \underline{f}(x,y) d\underline{C}(x,y) \right| \leq  L \frac{\sqrt{2}}{2^n}.
\end{equation*}
\end{corollary}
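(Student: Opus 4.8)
The plan is to reduce the statement to the estimate already established inside the proof of Theorem~\ref{main2}, with the abstract modulus of continuity $\epsilon$ replaced by an explicit quantity coming from the Lipschitz bound. First I would observe that each cell $I^n_{i,j}$ is the half-open version of a square of side length $1/2^n$, so its closure $\overline{I^n_{i,j}}$ is compact with Euclidean diameter $\sqrt{2}/2^n$; since $f$ is continuous on $[0,1]^2$, the minimum and maximum defining $\underline{f}_n$ and $\overline{f}_n$ on $I^n_{i,j}$ are attained at points of $\overline{I^n_{i,j}}$.

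Next I would invoke Lipschitz continuity. If $(x_*,y_*),(x^*,y^*) \in \overline{I^n_{i,j}}$ are points at which $f$ attains, respectively, its minimum and maximum over that cell, then for every $(x,y) \in I^n_{i,j}$
\[
 \overline{f}_n(x,y) - \underline{f}_n(x,y) = f(x^*,y^*) - f(x_*,y_*) \leq L\,\|(x^*,y^*)-(x_*,y_*)\| \leq L\,\frac{\sqrt{2}}{2^n},
\]
so that $|\overline{f}_n(x,y) - \underline{f}_n(x,y)| \leq L\sqrt{2}/2^n$ uniformly on $[0,1]^2$.

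Finally I would replay the chain of inequalities from the proof of Theorem~\ref{main2} with $\epsilon = L\sqrt{2}/2^n$. Using $\overline{f}_n \leq \underline{f}_n + L\sqrt{2}/2^n$ pointwise, together with the fact that $\underline{C}^n$ is an admissible integrator in the maximization problem that defines $\overline{C}^n$, one obtains
\[
 \int_{[0,1[^2} \overline{f}_n\, d\overline{C}^n \leq \int_{[0,1[^2} \Bigl(\underline{f}_n + L\tfrac{\sqrt{2}}{2^n}\Bigr)\, d\underline{C}^n = \int_{[0,1[^2} \underline{f}_n\, d\underline{C}^n + L\frac{\sqrt{2}}{2^n},
\]
while $\int_{[0,1[^2} \underline{f}_n\, d\underline{C}^n \leq \int_{[0,1[^2} \overline{f}_n\, d\overline{C}^n$ follows at once from $\underline{f}_n \leq \overline{f}_n$ and the optimality of $\overline{C}^n$. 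Combining the two displays yields the asserted bound $L\sqrt{2}/2^n$.

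There is essentially no difficult step here; the only points that deserve a little care are that the pointwise diameter estimate for $f$ is legitimate because the extrema are taken over the compact closures of the half-open cells, and that $\underline{C}^n$ genuinely competes in the problem defining $\overline{C}^n$, which is what makes the one-sided comparison of the two optimal integrals valid.
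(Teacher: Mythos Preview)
Your argument is correct and follows exactly the paper's route: bound $|\overline{f}_n-\underline{f}_n|$ by the Lipschitz constant times the cell diameter $\sqrt{2}/2^n$, then replay the comparison from the proof of Theorem~\ref{main2} with this explicit $\epsilon$. One small slip: in justifying the first display you invoke that $\underline{C}^n$ is admissible in the problem defining $\overline{C}^n$, but what is actually needed there is the opposite direction, namely that $\overline{C}^n$ is admissible in the problem defining $\underline{C}^n$ (so that $\int \underline{f}_n\,d\overline{C}^n \le \int \underline{f}_n\,d\underline{C}^n$); the fact you cite is precisely what drives the \emph{second} inequality, as you correctly use it there.
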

\begin{proof}
 Following the proof of Theorem \ref{main2} and using the Lipschitz-continuity of $f$ we get
\begin{equation*}
 | \overline{f}_n(x,y) - \underline{f}_n(x,y) | \leq L \frac{\sqrt{2}}{2^n}, \quad \text{ for all } (x,y) \in [0,1]^2,
\end{equation*}
and thus
\begin{equation*}
  \left | \int_{[0,1[^2} \overline{f}_n(x,y) d\overline{C}^n(x,y) - \int_{[0,1[^2} \underline{f}_n(x,y) d\underline{C}^n(x,y) \right | \leq  L \frac{\sqrt{2}}{2^n}.
\end{equation*}
\qed
\end{proof}

\section{Applications}

In this section we present two numerical examples in which we apply the approximation technique presented in Theorem \ref{main2}. We use an implementation of the Hungarian Algorithm in MatLab, which makes it possible to derive the solution of the linear assignment problem \eqref{lin} for a given matrix $A$ of size $2^{10} \times 2^{10}$ within seconds. The involved mini- or maximization of the integrand function on a given grid can be done efficiently, since the integrand functions are piecewise smooth.

\subsection{Uniform Distribution Theory}\label{udt}

A deterministic sequence $(x_n)_{n > 1}$ of points in $[0,1[$ is called uniformly distributed (u.d.) iff
\begin{equation*}
 \lim_{N \rightarrow \infty} \frac{1}{N} \sum_{n = 1}^N \mathbf{1}_{[a, b[} (x_n) = b-a
\end{equation*}
for all intervals $[a, b[ \subseteq [0,1[$. Furthermore, we call $g$ the asymptotic distribution function (a.d.f.) of a point sequence $(x_n,y_n)_{n > 1}$ in $[0,1[^2$ if
\begin{equation*}
 g(x,y) = \lim_{N \rightarrow \infty} \frac{1}{N}  \sum_{n = 1}^{N} \mathbf{1}_{[0,x[ \times [0,y[}(x_n,y_n),
\end{equation*}
holds in every point $(x,y)$ of continuity of $g$, for a survey of classical results in this field see \cite{sp}. In \cite{fial}, Fialov\'a and Strauch consider
\begin{equation*}
 \limsup_{N \rightarrow \infty} \frac{1}{N} \sum_{n = 1}^N f(x_n, y_n),
\end{equation*}
where $(x_n)_{n > 1}, (y_n)_{n > 1}$ are u.d.\ sequences in the unit interval and $f$ is a continuous function on $[0,1[^2$, see also \cite{ps}. In this case the a.d.f.\ $g$ of $(x_n,y_n)_{n > 1}$ is always a copula and we can write
\begin{equation}\label{inte}
 \lim_{N \rightarrow \infty} \frac{1}{N} \sum_{n = 1}^N f(x_n, y_n) = \int_0^1 \int_0^1 f(x,y) dg(x,y).
\end{equation}
Now we can derive upper bounds for \eqref{inte} by maximizing $g$ over the set of all copulas. This has already been done in \cite{fial} for functions $f$ where $\frac{\partial^2 f}{\partial_x \partial_y}(x,y)$ has constant sign for all $(x,y) \in [0,1[^2$. Note that this condition is equivalent to the two-increasing property of $f$ provided that $\frac{\partial^2 f}{\partial_x \partial_y}(x,y)$ exists on the unit square.\\ 

As a numerical example, we consider
\begin{equation*}
  \limsup_{N \rightarrow \infty} \frac{1}{N} \sum_{n = 1}^N \sin(\pi (x_n + y_n)).
\end{equation*}
The numerical results are illustrated in Table \ref{tab:t2}. Note that the approximations of the lower bound can be easily computed using the symmetry of the sine function.\\

A further interesting question concerns the sequences $(x_n)_{n > 1}, (y_n)_{n > 1}$ which maximize \eqref{inte}. Let $(x_n)_{n > 1}$ be a u.d.\ sequence and $C(x,y)$ a shuffle of $M$, then it is easy to see that $(f(x_n))_{n > 1}$ is u.d., where $f$ is the support of $C$. Thus if $C$ is the shuffle of $M$ which attains the maximum in \eqref{inte}, an optimal two-dimensional sequence is given as $(x_n, f(x_n))_{n > 1}$, where $(x_n)_{n > 1}$ is an arbitrary u.d.\ sequence. In Figure \ref{fig: max1}, we present the copula which attains the upper bound for the maximum in our approximation when $n = 7$.\\ 

Although we can not give a rigorous proof, by increasing $n$ it seems that the copula $C'$ which attains the maximum is the shuffle of $M$ with parameters $\{2, (0,0.75,1), (1), \{\omega(1) = -1, \omega(2) = 1 \}\}$. In this case we have
\begin{align*}
 \int_0^1 \int_0^1 \sin(\pi (x + y)) dC'(x,y) &= \int_0^1 \sin(\pi (x + f'(x))) dx\\
&= \int_0^{\frac{3}{4}} \sin(\pi (x + 0.75 - x)) dx + \int_{\frac{3}{4}}^1 \sin(\pi 2 x) dx\\
&= \frac{3}{4 \sqrt{2}} - \frac{1}{2 \pi} \approx 0.371175,
\end{align*}
where $f'$ denotes the support of $C'$.

\begin{table}[!ht]
\centering
\begin{tabular}[h]{|r|r|r|r|r|r|r|}
\toprule
	$n$ & 5 & 6 & 7 & 8& 9 & 10\\
UB		  & 0.3933 & 0.3824 & 0.377 & 0.3741 & 0.3727 & 0.3712\\
LB 	  & 0.3482 & 0.3598 & 0.3655 & 0.3684 & 0.3698 & 0.3711\\
\bottomrule
\end{tabular}
 \caption{Upper and lower bounds for the maximum in \eqref{inte} with respect to $n$.}
 \label{tab:t2}
\end{table}

\begin{figure}[H]
 \centering
 \includegraphics[width=7cm]{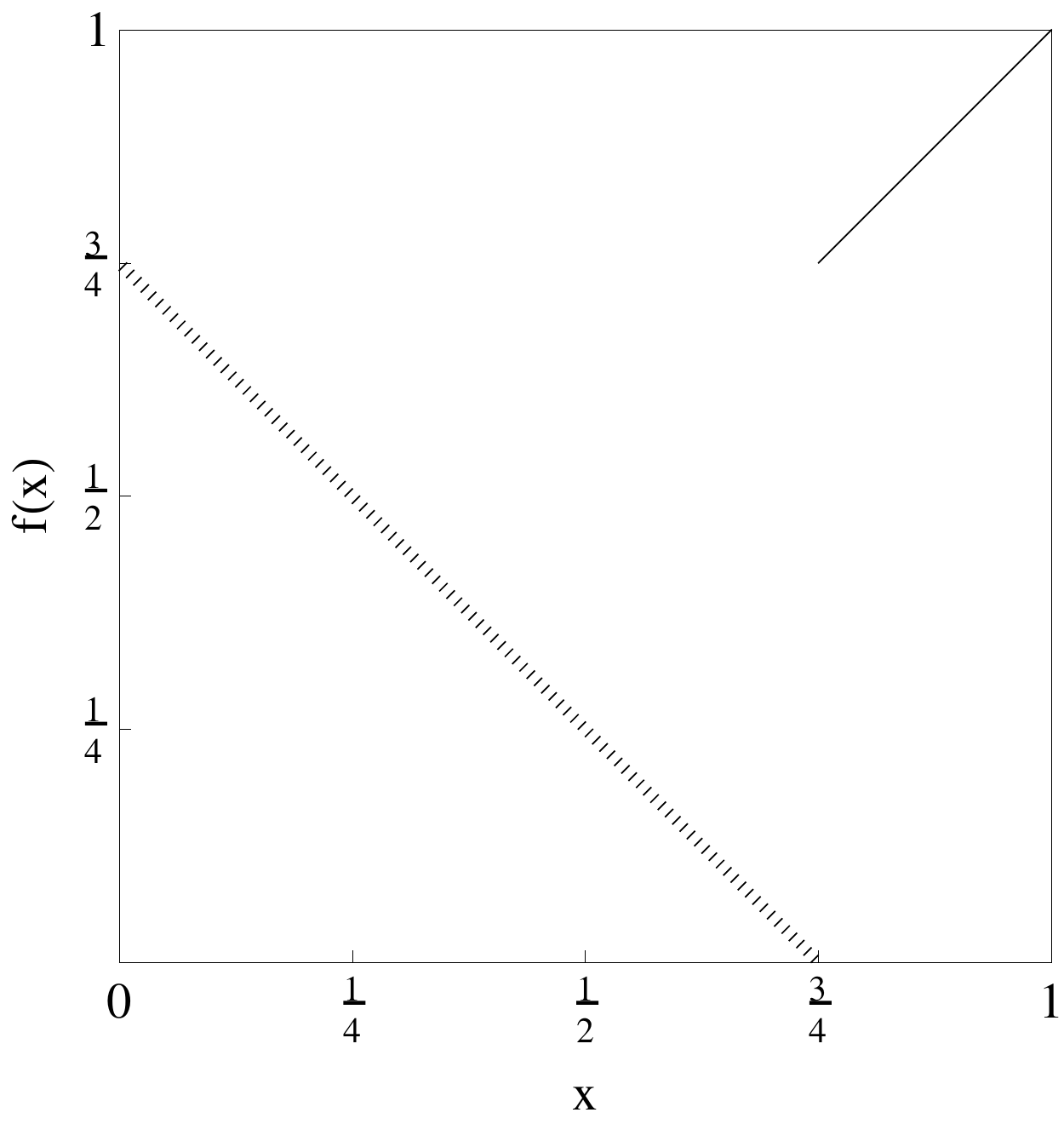}
\caption{Support of copula which attains upper bound for $\sin( \pi (X + Y))$ and $n = 7$.}
\label{fig: max1}
\end{figure}

\subsection{First-to-default Swaps}
A first-to-default swap (FTD) is a contract in which a protection seller (PS) insures a protection buyer (PB) against the loss caused by the first default event in a portfolio of risky assets. The PB pays regularly a fixed constant premium to the PS, the so-called spread, until the maturity $T$ of the contract or the first default event, whichever occurs first. In exchange, the PS compensates the loss caused by the default at the time of default.\\  

We assume that the underlying portfolio consists of two risky assets, for which the marginal default distributions are known, but the joint distribution is unknown. We want to derive a worst case bound in this setting. For the valuation of the FTD we follow the paper of Schmidt and Ward \cite{schmidt}. Note that Monte Carlo methods for the evaluation of first-to-default swaps, where the dependences within the portfolio is modeled by a copula, are e.g.\ presented in Aistleitner et al.\ \cite{aht} and Packham and Schmidt \cite{packham}.\\ 

Let $\tau_1, \tau_2$ denote the random default times of the two risky assets, let the notional be equal to one for both assets and $R_i, i = 1,2,$ be the so-called recovery rates, which are the percental amounts of money that can be liquidized in case of the default of an asset. We assume that the distribution of $\tau_i$ is given as
\begin{equation*}
 \mathbb{P}(\tau_i \leq t) = 1 - e^{-\lambda_i t}, \quad t > 0,
\end{equation*}
where the intensity $\lambda_i$ can be derived from the credit default swap market as
\begin{equation*}
 \lambda_i = \frac{s_i}{1 - R_i},
\end{equation*}
and $s_i$ is the premium of an insurance against the default of asset $i$.\\

Now denote by $\tau = \min(\tau_1, \tau_2)$ the first default time in the portfolio, let $0 = t_0 < t_1 < \ldots < t_n = T$ be the payment times of the constant spread and assume that there exists a risk free interest rate $r \geq 0$. Then, to guarantee a fair spread $s$, we obtain that the expected, discounted premium and default payments are equal, i.e.\
\begin{equation*}
 s \sum_{i = 0}^n e^{- r t_i} \mathbb{P}(\tau > t_i) = \sum_{i = 1}^2 \mathbb{E}\left[ (1- R_i) e^{- r \tau} \mathbf{1}_{\{ \tau < T \wedge \tau = \tau_i\}} \right].
\end{equation*}
By the above assumptions we obtain that
\begin{align*}
 &\mathbb{P}(\tau > t_i) = \int_{[0,1[^2} \mathbf{1}_{\left\{f(x, \lambda_1) > t_i ~\wedge ~f(y, \lambda_2) > t_i \right\}} dC(x,y),\\
 &\sum_{i = 1}^2 \mathbb{E}\left[ (1- R_i) e^{- r \tau} \mathbf{1}_{\{ \tau < T ~\wedge ~\tau = \tau_i\}} \right] = \\ 
&\int_{[0,1[^2} e^{-r \min\left( f(x, \lambda_1), f(y, \lambda_2) \right)} \biggl(\mathbf{1}_{\left\{ f(x, \lambda_1) \leq \min\left(f(y, \lambda_2), T\right) \right\}} (1 - R_1) \\ 
  &+ \mathbf{1}_{\left \{f(y, \lambda_2) \leq \min\left(f(x, \lambda_1), T\right) \right\}} (1 - R_2) \biggr) dC(x,y),
\end{align*}
where $f(x, \lambda) = \frac{- \log(1-x)}{\lambda}$ is the inverse distribution function of an exponential distribution with parameter $\lambda$ and $\mathbf{1}_{\{(x,y) \in B\}}$ denotes the characteristic function of set $B \subseteq [0,1[^2$.\\

Now we want to calculate the maximal spread $s$ by maximizing over all copulas. We obtain for the spread that
\begin{align}
 s &= \int_{[0,1[^2} \frac{e^{-r \min\left( f(x, \lambda_1), f(y, \lambda_2) \right)}}{\sum_{i = 0}^n e^{- r t_i} \mathbf{1}_{\left\{f(x, \lambda_1) > t_i ~\wedge ~f(y, \lambda_2) > t_i \right\}} }\notag \\
 &\cdot \biggl(\mathbf{1}_{\left\{ f(x, \lambda_1) \leq \min\left(f(y, \lambda_2), T\right) \right\}} (1 - R_1) \notag\\ 
 &+ \mathbf{1}_{\left \{f(y, \lambda_2) \leq \min\left(f(x, \lambda_1), T\right) \right\}} (1 - R_2)\biggr) dC(x,y). \label{ftd}
\end{align}
Note that the value of the integral is finite since the first payment is made at $t_0 = 0$. Furthermore, the integrand function in this example is not continuous, thus Theorem \ref{main2} cannot be applied. Nevertheless, it is clear that our technique provides upper and lower bounds for the optimal values, and since these bounds converge to each other our approach still works.\\

In Table \ref{tab:t1} we present numerical results for a concrete example with three payment times, $t_i = 0,1,2$. One can observe that the resulting copulas (given in Figures \ref{fig: max2} and \ref{fig: max3} for $n = 7,8$, respectively) are highly irregular in left upper quarter of the unit square. Nevertheless for $n = 10$ the upper and lower bounds for the optimal values are almost equal.

\begin{table}[!ht]
\centering
\begin{tabular}[h]{|r|r|r|r|r|r|r|r|}
\toprule
	$\lambda_1$ & $\lambda_2$ & $R_1$ & $R_2$ & $T$ & $r$ & $t_i$ &\\
$\frac{1}{3}$ & $\frac{1}{2}$	& 0.5	& 0.7	& 2 & 0.05 & (0, 1, 2)&\\	
\midrule
	$n$ & 3 & 4 & 5 & 6 & 7 & 8& 10\\
\midrule
$\overline{UB}$		& 0.3601 & 0.3355 &0.3301 &0.326 & 0.322 & 0.3202  &0.3195\\
$\overline{LB}$ 	& 0.2956  & 0.3031 & 0.314 & 0.318 & 0.3183 & 0.3189 & 0.3195\\
\midrule
$\underline{UB}$		& 0.1714  & 0.1674  & 0.1567 & 0.1535 & 0.1519  &  0.1505  & 0.1498\\
$\underline{LB}$ 	& 0.1453  & 0.1456 & 0.1458 & 0.1480  & 0.1492 & 0.1492 & 0.1495 \\
\bottomrule
\end{tabular}
 \caption{Approximation of the maximal spread of a FTD, where $\overline{UB}$ and $\overline{LB}$ and $\underline{UB}$ and $\underline{LB}$ denote the values of the upper and the lower bounds of the maximal and minimal value of the integral, and $n$ the fineness of the approximation according to Theorem \ref{main2}.}
 \label{tab:t1}
\end{table}

\begin{figure}[H]
 \centering
 \includegraphics[width=7cm]{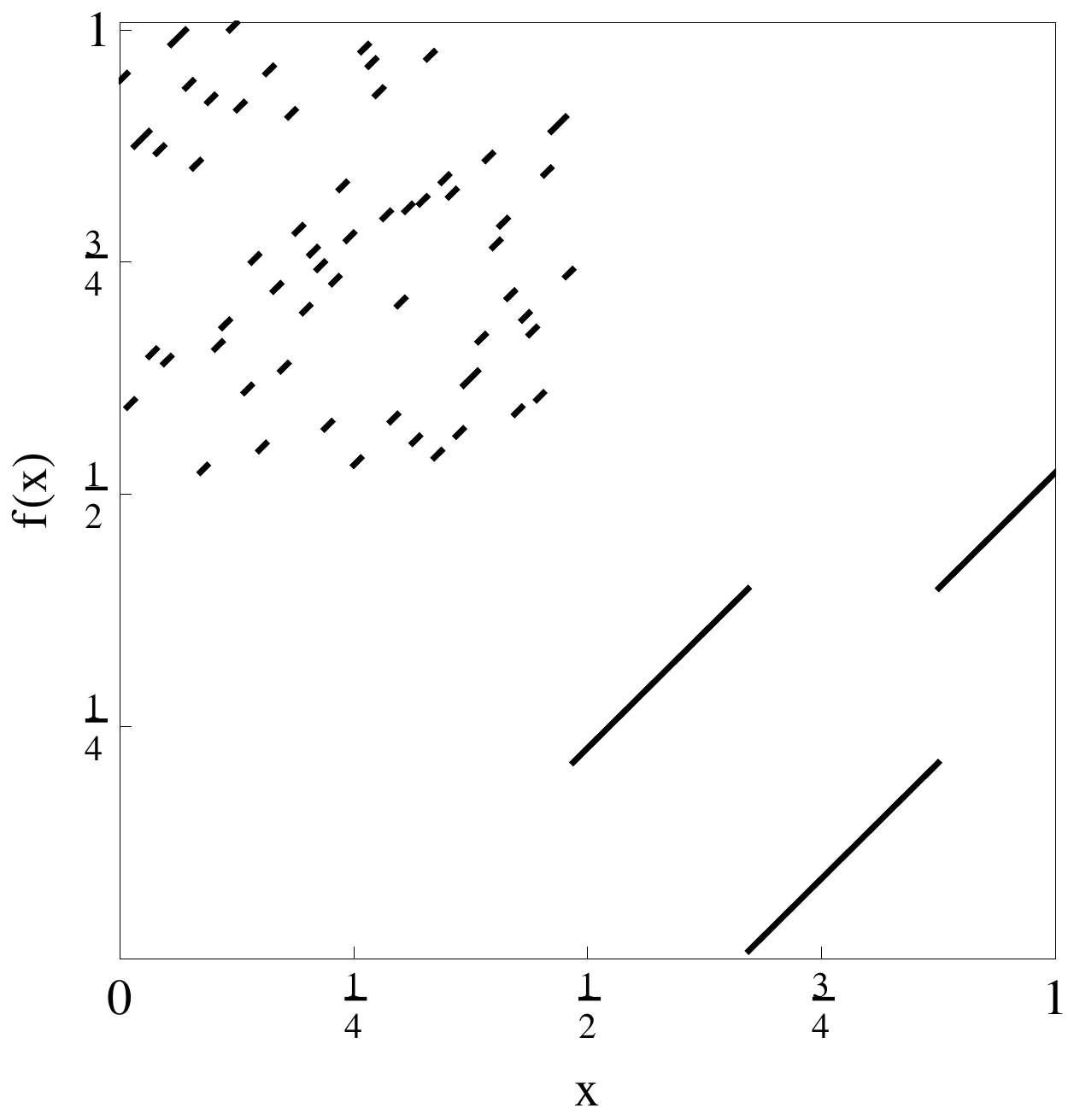}
\caption{Copula which attains the upper bound for the maximal value with $n = 7$.}
\label{fig: max2}
\end{figure}

\begin{figure}[H]
 \centering
 \includegraphics[width=7cm]{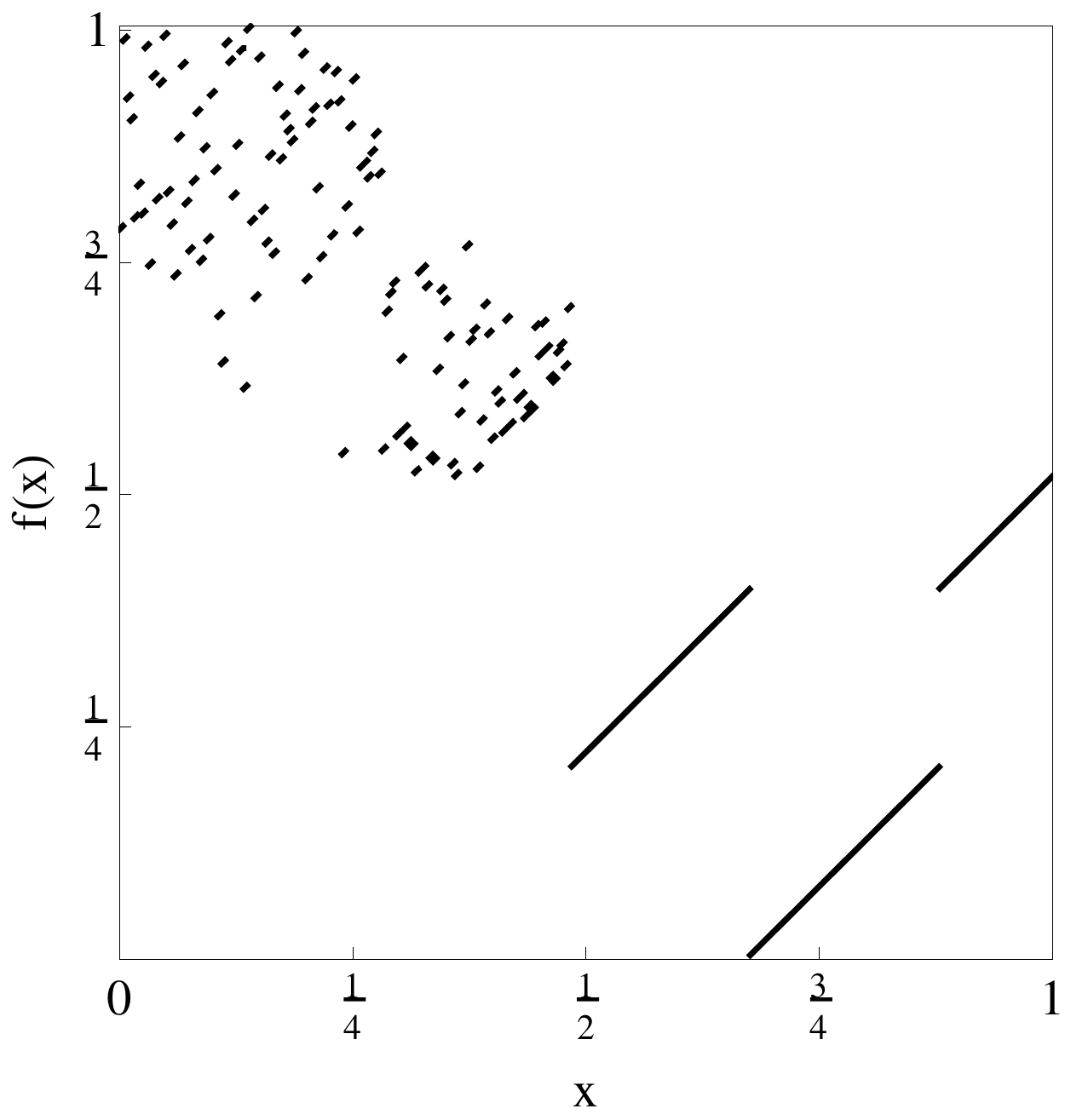}
\caption{Copula which attains the upper bound for the maximal value with $n = 8$.}
\label{fig: max3}
\end{figure}

\section{Conclusions}
The method presented in this paper can be used to derive sharp bounds for integrals of piecewise constant functions with respect to copulas. This extends the scientific literature on this topic, that is in general still open. The numerical effectiveness of our method was illustrated in two numerical examples from different branches of applied mathematics.\\

A starting point for further research is an extension of the presented technique to higher dimensional problems, since founding bounds for multidimensional integrals with respect to copulas has several applications in fields of mathematics such as number theory, financial and actuarial mathematics. Of course our aim is to study and investigate general problems and try to find a link between different branches of mathematics. Nevertheless, since the resulting so-called multi-index assignment are in general NP-hard, we plan to investigate heuristics; see e.g.\ \cite{burk}.

\section*{Acknowledgements}
The authors would like to thank Prof.\ Robert Tichy from TU Graz and Prof.\ Oto Strauch from the Slovak Academy of Science for helpful remarks and suggestions. Furthermore the authors are indebted to two anonymous referees who helped to improve the paper.


\end{document}